\newtheorem{theorem}{Theorem}
\newtheorem*{thm}{Theorem}
\newtheorem{lemma}{Lemma}
\newcommand{\sgn}{\operatorname{sgn}}
\begin{document}
\title[]{The Geometry of Nodal sets and Outlier detection}
\keywords{Laplacian eigenfunctions, nodal sets, outlier detection, Paley graphs, fractals.}
\subjclass[2010]{35J05, 35P30, 58J50 (primary) and 05C25, 05C50 (secondary)}

\author[]{Xiuyuan Cheng}
\address[Xiuyuan Cheng]{Applied Mathematics Program, Yale University, New Haven, CT 06510, USA}
\email{xiuyuan.cheng@yale.edu}

\author[]{Gal Mishne}
\address[Gal Mishne]{Applied Mathematics Program, Yale University, New Haven, CT 06510, USA}
\email{gal.mishne@yale.edu}

\author[]{Stefan Steinerberger}
\address[Stefan Steinerberger]{Department of Mathematics, Yale University, New Haven, CT 06510, USA}
\email{stefan.steinerberger@yale.edu}

\begin{abstract} Let $(M,g)$ be a compact manifold and let $-\Delta \phi_k = \lambda_k \phi_k$ be the sequence
of Laplacian eigenfunctions. We present a curious new phenomenon
which, so far, we only managed to understand in a few highly specialized cases: 
the family of functions $f_N:M \rightarrow \mathbb{R}_{\geq 0}$ 
$$ f_N(x) = \sum_{k \leq N}{ \frac{1}{\sqrt{\lambda_k}} \frac{|\phi_k(x)|}{\|\phi_k\|_{L^{\infty}(M)}}}$$
seems strangely suited for the detection of anomalous points on the manifold. It may be heuristically
interpreted as the sum over distances to the nearest nodal line and potentially hints at a new 
phenomenon in spectral geometry. We give rigorous statements on 
the unit square $[0,1]^2$ (where minima localize in $\mathbb{Q}^2$) and on Paley graphs (where $f_N$ recovers the
geometry of quadratic residues of the underlying finite field $\mathbb{F}_p$). Numerical examples show that the phenomenon seems to arise
on fairly generic manifolds.
\end{abstract}
\maketitle

\vspace{-20pt}

\section{Introduction. }
\subsection{Introduction.} The purpose of this paper is to report a curious observation in spectral geometry
that seems intrinsically interesting and may have nontrivial applications in outlier detection.
 Numerical examples on rough real-life data (see \S 3 below) indicate that the phenomenon is robust 
and seems to occur on fairly generic manifolds.

\begin{quote} \textbf{Observation.}  Let $(M,g)$ be a compact manifold and let $-\Delta \phi_k = \lambda_k \phi_k$ be the sequence
of Laplacian eigenfunctions. The maxima and minima of the function
\begin{equation}
\label{eq:anomaly}\nonumber
f_N(x) = \sum_{k \leq N}{ \frac{1}{\sqrt{\lambda_k}} \frac{|\phi_k(x)|}{\|\phi_k\|_{L^{\infty}(M)}}}
\end{equation}
seem to correspond to \textit{special} points on the manifold.
\end{quote}
The notion of \textit{special} point is vague and depends on the context: the special points
turn out to be the rational numbers on $[0,1]$, quadratic (non-)residues in finite fields $\mathbb{F}_p$ on Paley Graphs and sea-mines
in sonar data. We have no theoretical
understanding of the underlying phenomenon, nor do we understand its extent or the proper language in
which it should be phrased.

\subsection{Number Theory on $[0,1]$.} A first indicator that this quantity may
be of some interest was given by the third author \cite{stein} in the special case of the interval $[0,1]$. 
\begin{figure}[h!]
\begin{minipage}{0.49\textwidth}
\begin{center}
\begin{tikzpicture}[scale=0.85]
\node at (0,0) {\includegraphics[width= 0.8\textwidth]{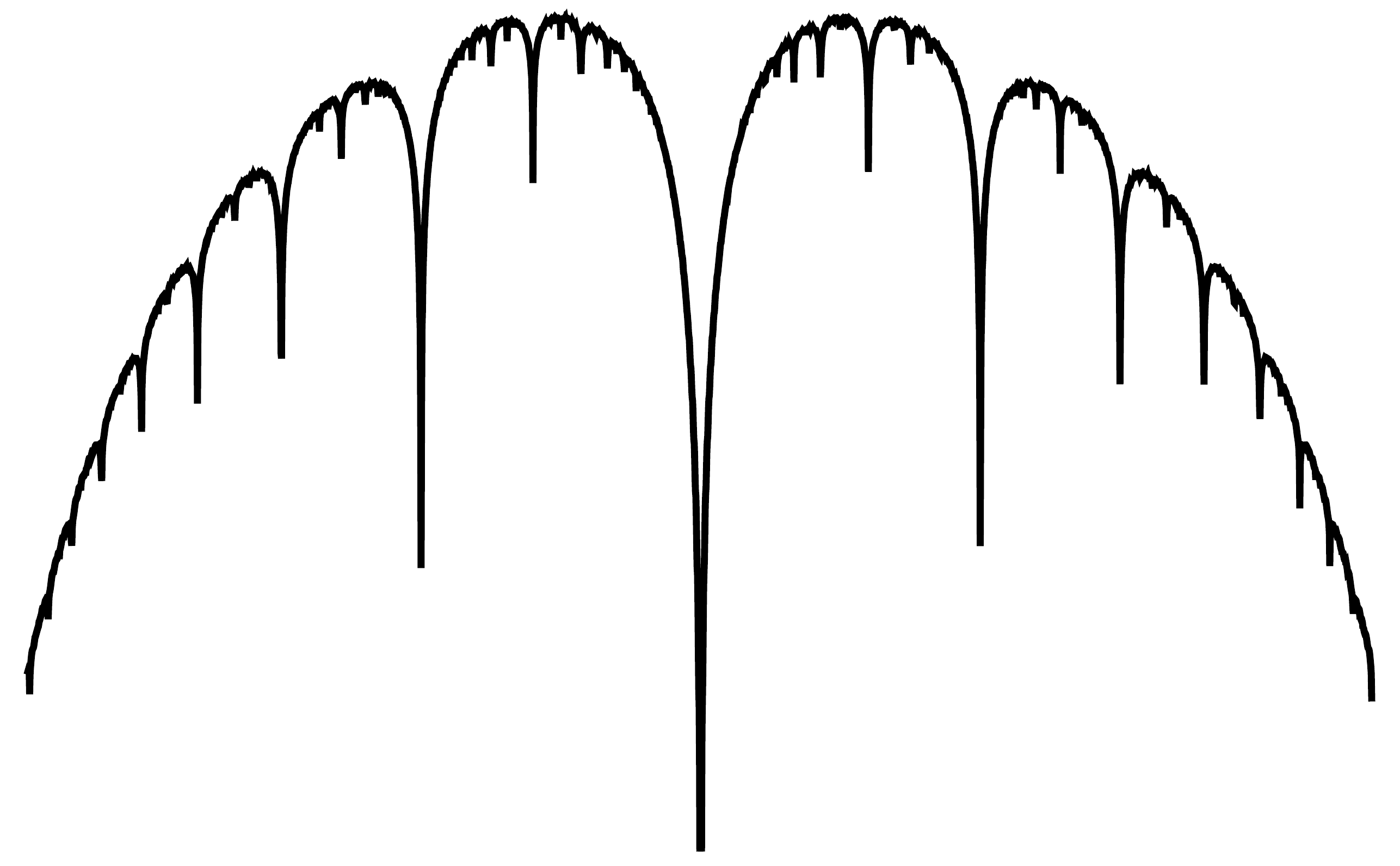}};
\draw [ultra thick] (-3,-2) -- (3,-2);
\draw [ultra thick] (-3,-2.1) -- (-3,-1.9);
\node at (-3, -2.4) {$0.1$};
\draw [ultra thick] (3,-2.1) -- (3,-1.9);
\node at (3, -2.4) {$0.9$};
\end{tikzpicture}
\end{center}
\end{minipage}
\begin{minipage}{0.49\textwidth}
\begin{center}
\begin{tikzpicture}[scale=0.85]
\node at (0,0) {\includegraphics[width= 0.8\textwidth]{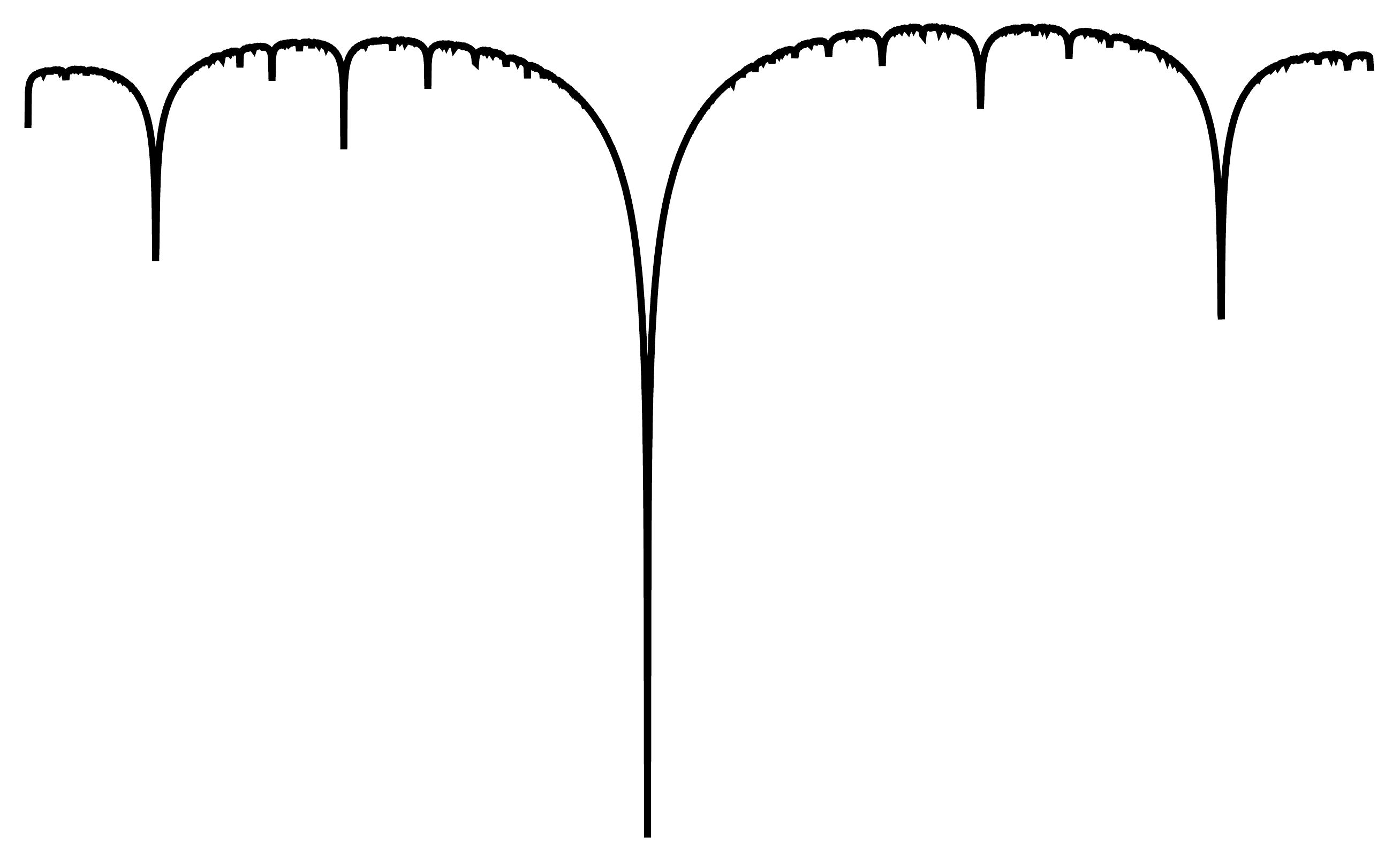}};
\draw [ultra thick] (-3,-2) -- (3,-2);
\draw [ultra thick] (-3,-2.1) -- (-3,-1.9);
\node at (-3, -2.4) {$0.38$};
\draw [ultra thick] (3,-2.1) -- (3,-1.9);
\node at (3, -2.4) {$0.39$};
\end{tikzpicture}
\end{center}
\end{minipage}
\captionsetup{width=0.9\textwidth}
\caption{The function $f_{N}$ for $N=50000$ on $[0.1, 0.9]$ and zoomed in (right): local minima are located
at rational numbers (the big cusp in the right is located at $x=5/13$).}
\end{figure}

The eigenfunctions of the Laplacian on $[0,1]$ (with Dirichlet boundary conditions) are merely trigonometric functions; thus
$$ f_N(x) = \sum_{1 \leq k \leq N}{ \frac{1}{\sqrt{\lambda_k}} \frac{|\phi_k(x)|}{\|\phi_k\|_{L^{\infty}(M)}}} =\sum_{k=1}^{N}{ \frac{|\sin{k \pi x}|}{k} }.$$
This simple function captures a lot of information: it has strict local minima 
in the rational points.
\begin{thm}[S. 2016] Let $p,q \in \mathbb{Z}$ and $q \neq 0$. The function $f_N(x)$ has a strict local minimum in the point $x=p/q \in \mathbb{Q}$ for all $N \geq (1+o(1))q^2/\pi$.
\end{thm}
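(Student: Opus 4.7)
The plan is to expand $f_N$ in a small neighborhood of $x=p/q$ by splitting the sum according to whether $q$ divides $k$. I would write $f_N=g+h$ with $g$ the sum over $k$ with $q\mid k$ and $h$ the remainder. At $x=p/q$ every term of $g$ vanishes (giving $g$ a V-shaped cusp there), while each summand of $h$ stays bounded away from zero and hence smooth in a neighborhood. Routine Taylor expansion then yields, for small $\epsilon$,
\[ g(p/q+\epsilon)=\sum_{m=1}^{L}\frac{|\sin(mq\pi\epsilon)|}{mq}=L\pi|\epsilon|+O\!\big(N^3\epsilon^3/q\big),\quad L:=\lfloor N/q\rfloor, \]
\[ h(p/q+\epsilon)-h(p/q)=\pi\epsilon\,S+O(N^2\epsilon^2),\quad S:=\sum_{\substack{k\leq N\\ q\nmid k}}\cos(k\pi p/q)\sgn(\sin(k\pi p/q)). \]
Hence $f_N(p/q+\epsilon)-f_N(p/q)=\pi(L|\epsilon|+\epsilon S)+\text{(quadratic)}$, so $p/q$ is a strict local minimum as soon as $|S|<L$.

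Next I would reduce $S$ to a short partial sum. The summand $c(k):=\cos(k\pi p/q)\sgn(\sin(k\pi p/q))$ is $q$-periodic, and the reflection $k\mapsto q-k$ (combined with $\cos(\pi p-\theta)=(-1)^p\cos\theta$ and $\sin(\pi p-\theta)=(-1)^{p+1}\sin\theta$) gives $c(q-k)=-c(k)$; when $q$ is even, the fixed point $k=q/2$ contributes $0$ because $\cos(p\pi/2)=0$ for the odd $p$ forced by $\gcd(p,q)=1$. Therefore $c$ has vanishing mean over a period, so $S=C(r):=\sum_{k=1}^{r}c(k)$ with $r=N\bmod q\in\{0,\dots,q-1\}$.

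The key identity is the closed form
\[ c(k)=\cos\!\big(\pi(kp\bmod q)/q\big),\qquad q\nmid k, \]
which follows because $s(y):=\cos(\pi y)\sgn(\sin(\pi y))$ is $1$-periodic and restricts on $(0,1)$ to $\cos(\pi y)$: plugging in $y=kp/q$ and setting $m:=kp\bmod q\in\{1,\dots,q-1\}$ gives $c(k)=s(m/q)=\cos(\pi m/q)$. Since $\gcd(p,q)=1$, the set $A_r:=\{kp\bmod q:1\leq k\leq r\}$ is an $r$-element subset of $\{1,\dots,q-1\}$, so $C(r)=\sum_{m\in A_r}\cos(\pi m/q)$. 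Bounding over all such subsets is a one-line greedy argument: the extremizer is $A=\{1,\dots,r\}$ (take the top $r$ positive cosines), and the Dirichlet-kernel identity gives
\[ \sum_{m=1}^{r}\cos(\pi m/q)=\frac{\sin((2r+1)\pi/(2q))-\sin(\pi/(2q))}{2\sin(\pi/(2q))}\leq\frac{1}{2\sin(\pi/(2q))}-\tfrac12=\frac{q}{\pi}-\tfrac12+O(q^{-1}), \]
uniformly in $r<q$. Consequently $|S|\leq q/\pi+O(1)$, and the required inequality $|S|<L$ is secured once $N>q^2/\pi+O(q)$, which is precisely $N\geq(1+o(1))q^2/\pi$.

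The main obstacle is the closed-form identity for $c(k)$ together with the subsequent subset-sum interpretation: without it, a naive triangle-inequality bound yields only $|C(r)|\leq\sum_{k=1}^{q-1}|c(k)|\sim 2q/\pi$, losing a factor of $2$ and proving only the weaker threshold $N\gtrsim 2q^2/\pi$. It is the cancellation between positive and negative cosines, made transparent by the subset-sum reduction, that recovers the sharp constant $1/\pi$.
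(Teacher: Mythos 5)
The statement you are proving is quoted in the paper from reference \cite{stein} and is not proved there, so there is no proof of it in the paper to compare against directly. The closest the paper offers is the proof of Theorem~1 (the two-dimensional analogue on the unit square), and your decomposition is structurally the same as the one used there: split the sum according to whether $q\mid k$ or not, observe that the $q\mid k$ terms contribute a cusp of slope proportional to $\lfloor N/q\rfloor$ while the $q\nmid k$ terms contribute a differentiable drift whose derivative is the exponential/cosine sum $S$, and then show the cusp dominates the drift. Your argument appears to be correct: the linearizations of $g$ and $h$ and their error terms are right, the $q$-periodicity and zero mean of $c(k)=\cos(k\pi p/q)\,\sgn(\sin(k\pi p/q))$ follow from the reflection $k\mapsto q-k$ exactly as you compute, the reduction $S=C(r)$ with $r=N\bmod q$ is valid because the $q\mid k$ terms vanish, and the identity $c(k)=\cos(\pi(kp\bmod q)/q)$ is a clean reformulation that turns the problem into bounding a subset sum of cosines.

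Where you go beyond the paper's Theorem~1 argument is precisely in the estimate of $S$. The paper, which does not need a sharp constant in the 2D setting, controls the corresponding oscillatory sum via Lemma~1 (a rearrangement/Abel-summation bound giving $\frac{3}{2}b_N\sum_{n=1}^{p}|a_n|$); applied here that would give only $|S|\lesssim \tfrac{3}{\pi}q$, and even a bare triangle inequality gives $|S|\leq\sum_{m=1}^{q-1}|\cos(\pi m/q)|\sim\tfrac{2}{\pi}q$, both of which miss the constant $1/\pi$ in the theorem. Your observation that $C(r)$ is a sum of $\cos(\pi m/q)$ over an $r$-element subset of $\{1,\dots,q-1\}$, that the extremizing subset is an initial segment, and that the Dirichlet-kernel identity then gives the uniform bound $|C(r)|\leq\frac{1}{2\sin(\pi/(2q))}-\frac12=\frac{q}{\pi}+O(1)$, is the essential new ingredient that recovers the sharp threshold $N\geq(1+o(1))q^2/\pi$. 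Two very small points worth stating explicitly in a clean write-up: you tacitly assume $\gcd(p,q)=1$ (needed for $A_r$ to be an $r$-element set and for $\sin(k\pi p/q)\neq0$ when $q\nmid k$), and the neighborhood in which the Taylor expansion of $h$ is valid depends on $N$ (finitely many terms, so this is harmless, but worth a word).
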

We should emphasize that there are many natural questions about $f_N$ in this simple setting that are still open: where are the local maxima? Is their location in any way related to reals that
cannot be well approximated by rationals with small denominator? Is there any connection to the continued fraction expansion?\\

This shows $f_N$ to be of interest on $[0,1]$, however, we found that it is indeed successful in isolating natural points of interest in a variety
of settings and the rest of the paper gives both theoretical and numerical examples. A first example is Fig.\ref{fig:mine1}, which shows a sea-mine
 in a side-scan sonar image with an arrow explicitly identifying the mine (left) and $f_{15}$ (right). Note
that the background is highly cluttered and the function $f_N$ performs astonishingly well in detecting the sea-mine.

\begin{figure}[h!]
\centering{\includegraphics[width=0.95\linewidth]{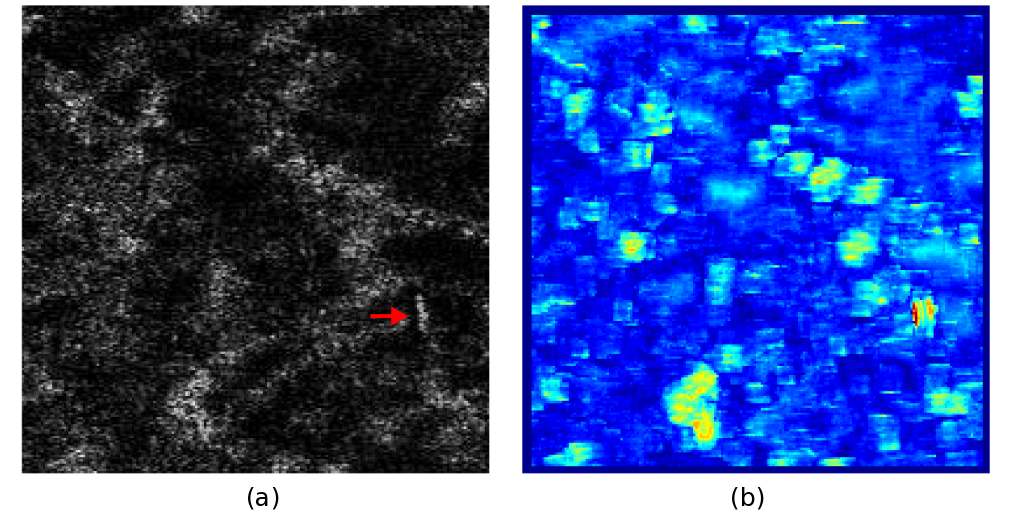}}
\caption{(a) Side scan sonar image with sea-mine indicated by red arrow. (b) The value of $f_{15}$ for each pixel (red being large values, blue being small values).
\label{fig:mine1}}
\end{figure}

At this point, we are not aware of any theory that could explain the behavior of $f_N$. The existing results (see \S 2 for the behavior of $f_N$ on Paley graphs)
seem to suggest that a natural starting point for its study might be that of simple manifolds ($\mathbb{T}$, $\mathbb{S}^2$, the unit disk $\mathbb{D}$ and
finite graphs equipped with the Graph Laplacian), where the behavior seems to be coupled to explicit number-theoretic problems such as the one discussed
above for $[0,1]$.

\subsection{Organization.} \S 2 gives three rigorous results that show $f_N$ to capture meaningful information in three
very different cases. \S 3 gives some explicit numerical examples and \S 4 gives proofs of the results. We conclude
with some remarks in \S 5.

\section{Three Rigorous Results}

\subsection{The Unit Square.}  We generalize the result above to the unit square $[0,1]^2$.

\begin{theorem} Let $M=[0,1]^2$ with the flat metric and consider 
$$ f_N(x,y) = \sum_{1 \leq k \leq N}{ \frac{1}{\sqrt{\lambda_k}} \frac{|\phi_k(x,y)|}{\|\phi_k\|_{L^{\infty}(M)}}}$$
For every fixed $0 < y < 1$, the function $f_N(\cdot, y)$ has local minimizers in $x=p/q$
for all $N$ sufficiently large and the same holds for $f_N(x, \cdot)$, fixed $0 < x < 1$ and $N$
sufficiently large. Moreover, $f_N(x,y)$ has strict local minimizer in $(p/q, r/s) \in \mathbb{Q}^2$ for $N$ sufficiently large.
\end{theorem}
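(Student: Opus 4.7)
The plan is to follow the one-dimensional strategy of Theorem 1 from \cite{stein}, exploiting that the Dirichlet eigenfunctions of $[0,1]^2$ factor. The eigenpairs are $(\phi_{m,n},\lambda_{m,n}) = (2\sin(m\pi x)\sin(n\pi y),\,\pi^2(m^2+n^2))$ with $\|\phi_{m,n}\|_{L^\infty}=2$, so setting $R:=\sqrt{\lambda_N/\pi^2}$ one may write
$$ f_N(x,y) = \frac{1}{\pi}\sum_{\substack{m,n\ge 1\\ m^2+n^2\le R^2}} \frac{|\sin(m\pi x)||\sin(n\pi y)|}{\sqrt{m^2+n^2}}. $$
For the coordinate-line statement I would fix $y\in(0,1)$ and regroup the double sum as $f_N(x,y)=\sum_{m} c_m(y,N)\,|\sin(m\pi x)|$ with positive weights $c_m(y,N) = \frac{1}{\pi}\sum_{n\le\sqrt{R^2-m^2}} |\sin(n\pi y)|/\sqrt{m^2+n^2}$. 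This reduces the analysis to a one-variable object of exactly the form handled in Theorem 1, but with modified, $y$- and $N$-dependent coefficients.

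Near $x=p/q$, setting $x=p/q+\varepsilon$, I would split the sum into the \emph{kinky} part ($q\mid m$, where $\sin(m\pi p/q)=0$ and $|\sin(m\pi x)|=|\sin(m\pi\varepsilon)|$) and the \emph{smooth} part ($q\nmid m$). The kinky part gives a one-sided derivative jump
$J(N,y) = 2\pi\sum_{q\mid m,\,m\le M} m\,c_m(y,N)$,
while the smooth part contributes the shared one-sided derivative
$S(N,y) = \pi\sum_{q\nmid m,\,m\le M} m\,c_m(y,N)\,\chi(m)$, where $\chi(m)=\sgn(\sin(m\pi p/q))\cos(m\pi p/q)$. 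A strict local minimum at $p/q$ is obtained once $J(N,y) > 2|S(N,y)|$. To estimate $J$, note that by Weyl equidistribution (for irrational $y$) or by a direct count over residues modulo $2s$ (for $y=r/s\in\mathbb{Q}$) one has $c_m(y,N)\asymp\log(R/m)$ for $1\le m\ll R$, giving $J(N,y)\gtrsim R^2/q$. For $S$, the key observation is that $\chi$ is $2q$-periodic with mean zero, so $|\sum_{m\le M}\chi(m)|$ is uniformly bounded; Abel summation then bounds $|S(N,y)|$ by a constant multiple of the total variation of $m\mapsto m\,c_m(y,N)$, which is $O(R)$. Hence $J\gg|S|$ for $N$ large, and this proves the existence of strict one-variable local minima.

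For the planar minimum at $(p/q,r/s)$ I would run the same decomposition in both variables simultaneously, splitting the sum into four families indexed by the divisibilities $q\mid m$ and $s\mid n$. The doubly-kinky family contributes $|\sin(m\pi\varepsilon)||\sin(n\pi\delta)|$ near $(p/q+\varepsilon,\,r/s+\delta)$, a product of cusps whose positive contribution to $f_N$ strictly dominates the three smoother families as $N\to\infty$, with expected rate $\asymp R^2/(qs)$. The one-sidedly smooth families (one divisibility, one not) are controlled by the Abel-summation argument of the previous paragraph applied in the smooth variable; the fully smooth family is handled by a double Abel summation exploiting mean-zero periodicity in both variables.

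The hardest step will be the quantitative smooth-term estimate: ensuring $|S(N,y)|<J(N,y)/2$ requires the variation bound on $m\mapsto m\,c_m(y,N)$, which in turn requires careful control of how the lattice-point count $\#\{n:n^2\le R^2-m^2\}$ interacts with the equidistribution of $ny \bmod 1$. The planar case is similar in spirit but requires these estimates to hold simultaneously in both variables, and the three partial-cusp families each demand their own oscillation-versus-variation comparison.
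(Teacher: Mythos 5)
Your proposal matches the paper's proof in all essential respects: same lattice-point representation, same kinky/smooth splitting around $x=p/q$, a lower bound $\asymp R^2/q$ for the one-sided derivative jump of the kinky part, and a mean-zero-plus-bounded-variation (Abel summation) argument to bound the smooth part's derivative, followed by the analogous decomposition in both variables for the planar minimum. Two small remarks. First, your smooth-term bound $|S(N,y)| = O(R)$ omits a factor of $q$: Abel summation bounds $|S|$ by $\bigl(\sup_M|\sum_{m\le M}\chi(m)|\bigr)$ times the variation, and the first factor is $O(q)$ since $\chi$ is ($q$-, hence $2q$-) periodic with mean zero, giving $|S| \lesssim qR$ — still dominated by $R^2/q$, so the conclusion is unaffected. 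Second, the paper sidesteps the need to verify that $m\mapsto m\,c_m(y,N)$ has total variation $O(R)$ (it is not monotone, since the $n$-range shrinks as $m$ grows): it swaps the order of summation so that the inner sum over $m$, for each fixed $n$, has the genuinely monotone weight $b_m = 1/\sqrt{1+(n/m)^2}$, to which a single summation-by-parts lemma (the paper's Lemma 4.1) applies cleanly; your direct variation claim is plausible but would need its own short argument.
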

There does not seem to be any major obstruction to generalizing the result to $[0,1]^d$ but, for the
sake of a clear and concise proof, we restrict ourselves to $d=2$.
It would be of interest to have similar results on other domains. Fine estimates on the Bessel function could possibly allow
to obtain an analogue of Theorem 1 on the unit disk $\mathbb{D} \subset \mathbb{R}^2$.
It is not clear whether many other domains would be similarly accessible via an explicit analysis
because eigenfunctions are rarely known in closed form. Other natural geometric objects are
ellipses and the equilateral triangle; an additional difficulty
on $\mathbb{S}^{d-1}$ is the presence of multiplicities (see \S 5.3). However, here it may nonetheless be very worthwhile
to investigate the behavior of the classical spherical harmonics as they present a natural choice of basis within each
eigenspace.

\subsection{Paley Graphs} While the number of classical domains with an explicit closed-form expression
for the eigenfunctions is not large, spectral graph theory provides a large number of graphs that,
equipped with the Graph Laplacian, become a natural object of investigation.
We proceed by considering a curious example provided by the Paley Graph. Let $p \equiv 1~$(mod 4) be prime and
consider a graph with vertices identified by elements of the finite field $V = \left\{0, 1, \dots, p-1\right\} \cong \mathbb{F}_p$, where a pair $(a,b) \in V \times V$ is connected by
an edge if and only if the equation
$$a-b \equiv x^2 \qquad \mbox{has a solution for}~0 \neq x \in \mathbb{F}_p.$$

\begin{figure}[h!]
\begin{center}
\includegraphics[width=0.3\textwidth]{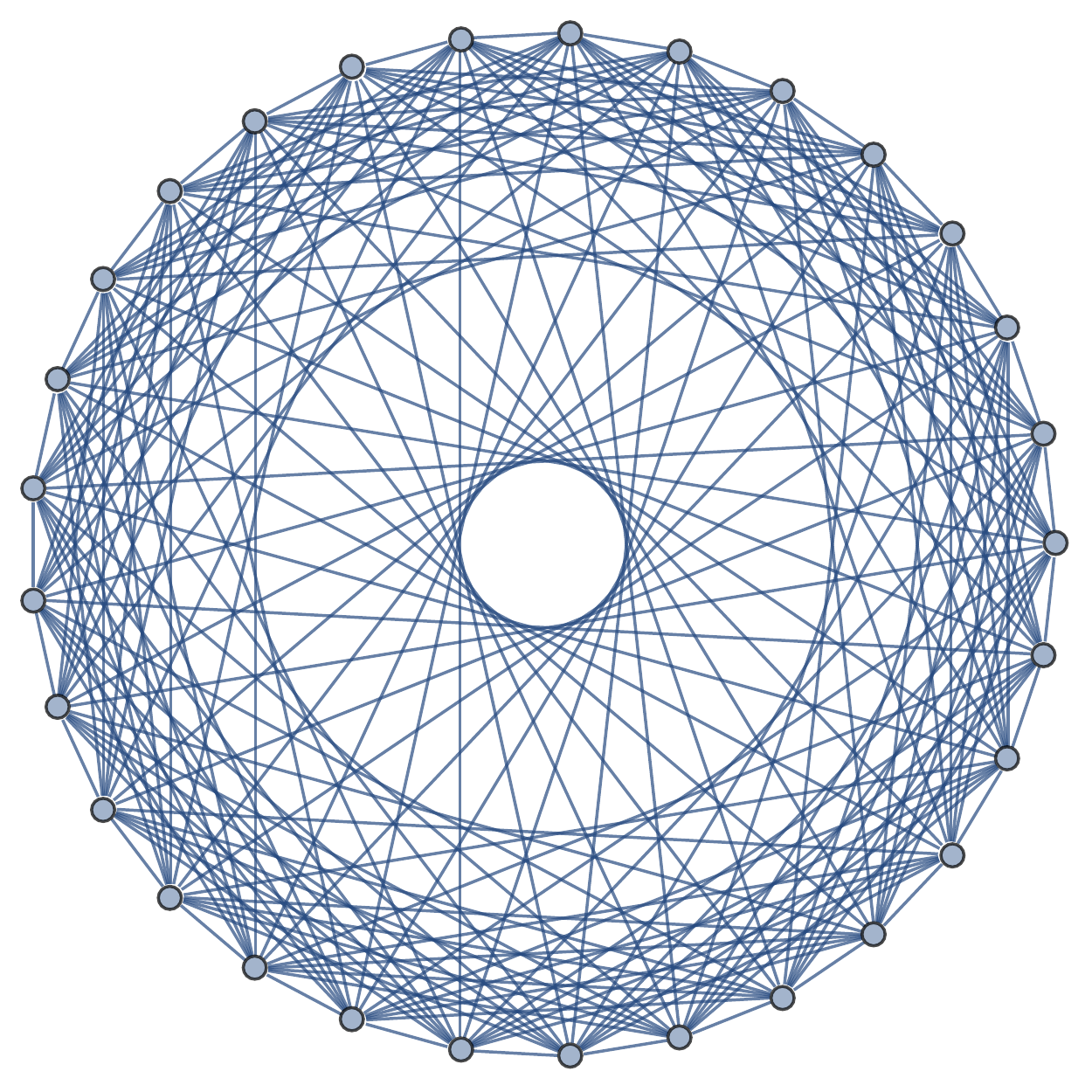}
\end{center}
\caption{The Paley Graph for $\mathbb{F}_{29}$.}
\end{figure}

An example of the arising graph for $\mathbb{F}_{29}$ can be seen in Figure 2. We will work with complex eigenvalues and slightly change $f_N$ to
$$ \qquad  \sum_{k}{ \frac{1}{\sqrt{\lambda_k}} \frac{\phi_k(x)}{\|\phi_k\|_{L^{\infty}(M)}}}$$
to avoid losing the information in the phase. Moreover, since the matrix is finite, it is more natural to sum over all the eigenspaces as opposed to introducing
an artificial cut-off: the natural analogue of $f_N$ on a finite graph is summation over all eigenvectors.
\begin{theorem} For the canonical choice of eigenfunctions of the Paley graph over $\mathbb{F}_p$, the function
$$  \sum_{k=0}^{p-1}{ \frac{1}{\sqrt{\lambda_k}} \frac{\phi_k(x)}{\|\phi_k\|_{L^{\infty}(M)}}}$$
assumes exactly three different values and depending on whether $x=0$, a quadratic residue or a nonquadratic residue in $\mathbb{F}_p$.
\end{theorem}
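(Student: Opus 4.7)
The plan is to diagonalize the Paley-graph Laplacian by exploiting the fact that this is a Cayley graph on the additive group $\mathbb{F}_p$. The $p$ additive characters $\chi_k(x) = e^{2\pi i k x / p}$ form a simultaneous eigenbasis for every Cayley adjacency matrix on $\mathbb{F}_p$, and each of them satisfies $\|\chi_k\|_{L^\infty} = 1$, so the $L^\infty$-normalization in the definition of $F$ is trivial. The adjacency eigenvalue attached to $\chi_k$ is the character sum $\mu_k = \sum_{s \in S} \chi_k(s)$, where $S$ is the set of nonzero quadratic residues. Since each element of $S$ has exactly two square roots, this is essentially a quadratic Gauss sum, and the classical evaluation (where $p \equiv 1 \pmod 4$ is used precisely to make $\sqrt{p}$ real and positive) yields $\mu_k = \tfrac{1}{2}\left(-1 + \left(\frac{k}{p}\right)\sqrt{p}\right)$ for $k \neq 0$. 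Passing to the Laplacian $L = \tfrac{p-1}{2}I - A$, the $p-1$ nonzero Laplacian eigenvalues collapse into just two values: $\lambda_- = (p-\sqrt{p})/2$ on characters with quadratic-residue index, and $\lambda_+ = (p+\sqrt{p})/2$ on the non-residues. The $k=0$ summand is formally omitted since $\lambda_0 = 0$; any convention that retained it would only shift the whole function by an $x$-independent constant and is irrelevant for counting distinct values.

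With only two nonzero Laplacian eigenvalues, the full sum breaks into two pieces,
$$F(x) = \alpha \sum_{k \in \mathrm{QR}} e^{2\pi i k x/p} + \beta \sum_{k \in \mathrm{NQR}} e^{2\pi i k x/p}, \qquad \alpha = \lambda_-^{-1/2},\ \beta = \lambda_+^{-1/2}.$$
The two inner character sums can then be isolated from their sum and difference. The sum $\sum_{k=1}^{p-1} e^{2\pi i k x/p}$ equals $p-1$ if $x=0$ and $-1$ otherwise. The difference $\sum_{k=1}^{p-1} \left(\frac{k}{p}\right) e^{2\pi i k x/p}$, after the substitution $k \mapsto k x^{-1}$ for $x \neq 0$, reduces to the same basic quadratic Gauss sum and evaluates to $\left(\frac{x}{p}\right)\sqrt{p}$; for $x = 0$ it vanishes because the Legendre symbol averages to $0$. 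Solving the resulting $2 \times 2$ linear system produces closed forms for the QR- and NQR-indexed character sums that depend on $x$ only through its quadratic class.

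Substituting the three cases back into $F$ produces the explicit values
$$F(0) = \tfrac{p-1}{2}(\alpha + \beta), \quad F_{\mathrm{QR}} = -\tfrac{\alpha+\beta}{2} + \tfrac{\sqrt{p}}{2}(\alpha - \beta), \quad F_{\mathrm{NQR}} = -\tfrac{\alpha+\beta}{2} - \tfrac{\sqrt{p}}{2}(\alpha - \beta).$$
The last thing to verify is that these three numbers are pairwise distinct; since $\lambda_- < \lambda_+$ one has $\alpha > \beta > 0$, so both $\alpha + \beta$ and $\sqrt{p}(\alpha - \beta)$ are strictly positive and the three values visibly separate. I expect the only step of genuine content to be the quadratic Gauss sum identity $\sum_{x=1}^{p-1} \left(\frac{x}{p}\right) e^{2\pi i x/p} = \sqrt{p}$, which is what forces $p \equiv 1 \pmod 4$ into the hypothesis; everything else reduces to elementary character arithmetic on $\mathbb{F}_p$ and a short bit of linear-algebra bookkeeping.
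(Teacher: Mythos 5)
Your argument follows the same route as the paper: diagonalize the Cayley--graph Laplacian by the additive characters $\chi_k$, observe that each has $\|\chi_k\|_\infty=1$, split the nonzero spectrum into $\lambda_-$ on quadratic-residue indices and $\lambda_+$ on non-residues, and reduce the resulting sums to Gaussian periods, which the multiplicative substitution $k\mapsto kx^{-1}$ shows depend on $x$ only through its quadratic class. Where you go further is in actually evaluating the Gaussian periods via the quadratic Gauss sum, producing closed forms for $F(0)$, $F_{\mathrm{QR}}$, $F_{\mathrm{NQR}}$, and then checking pairwise distinctness via $\alpha>\beta>0$ --- a step the paper claims (``exactly three \emph{different} values'') but never verifies. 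You also cleanly address the $k=0$ term, which is ill-defined since $\lambda_0=0$; the paper glosses over this (and in fact misstates the trivial Laplacian eigenvalue as $(p-1)/2$ rather than $0$, apparently conflating adjacency and Laplacian spectra). So this is the same underlying proof, executed more carefully at the edges.
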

The three different values can be written down in closed form (see the proof for details).
This result is again of a number-theoretical flavor and shows that quantities of the type under consideration seem to recover relatively subtle details
about the underlying graph. Analogues of Theorem 2 are expected 
hold on other algebraic structures and graphs with underlying symmetries; a more thorough understanding of graphs could greatly aid the
understanding of the general case and we believe it to be of interest.

\subsection{$\mathbb{T}$ with a perturbed potential.} We will now consider the Laplacian on the one-dimensional $\mathbb{T}$ with a slight local perturbation of a constant potential in a point. More precisely, let $\phi:[0,1]\rightarrow \mathbb{R}_{<0}$ be a negative function
and let us consider the manifold $(\mathbb{T}, dx)$ with the potential
$$ V_{y,\varepsilon}(x) = \begin{cases} 1 + \varepsilon \phi\left( \frac{x-y}{\varepsilon} \right) \qquad &\mbox{if}~y \leq x \leq y+ \varepsilon \\ 1 \qquad &\mbox{otherwise.} \end{cases}$$ 
We would expect that the location $y$ of the perturbation is recovered as an `anomaly' that is discovered by
the eigenfunctions of $-\Delta_{} + V_{y, \varepsilon}$. 
\begin{theorem} For every $\varepsilon > 0$, there exists a $N_{\varepsilon} \in \mathbb{R}$ such that, for all $N \leq N_{\varepsilon}$
$$ \sum_{k \leq N}{ \frac{1}{\sqrt{\lambda_k}} \frac{|\phi_k(x)|}{\|\phi_k\|_{L^{\infty}(M)}}} \qquad \mbox{has a strict local minimum in}~y \leq x_0 \leq y+\varepsilon.$$
Moreover, we have that $N_{\varepsilon} \rightarrow \infty$ as $\varepsilon \rightarrow 0$.
\end{theorem}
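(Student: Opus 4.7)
My plan is to use (degenerate) first-order perturbation theory to identify a natural eigenbasis of $-\Delta + V_{y,\varepsilon}$ that, at leading order, consists of cosines and sines translated to be centered on the support of the perturbation. In this basis the summation in $f_N$ collapses to a function with a transparent local minimum, and the theorem then follows by bounding the perturbative error. Write $V_{y,\varepsilon} = 1 + W$ with $W \leq 0$ supported on $[y, y+\varepsilon]$ and $\int W = -c\varepsilon^2$ for $c := -\int_0^1 \phi > 0$. Let $H_0 = -\Delta + 1$ be the unperturbed operator, with simple eigenvalue $\lambda_0 = 1$ and eigenfunction $\phi_0 \equiv 1$, and doubly degenerate eigenvalues $\lambda_k = 1 + 4\pi^2 k^2$ with eigenspaces $E_k = \operatorname{span}(c_k, s_k)$, where $c_k(x) = \sqrt{2}\cos(2\pi k x)$ and $s_k(x) = \sqrt{2}\sin(2\pi k x)$.

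For $k \ll 1/\varepsilon$, both $c_k$ and $s_k$ are essentially constant across $[y, y+\varepsilon]$, so the matrix of $W$ on $E_k$ is, to leading order, the rank-one matrix
\[
-c\varepsilon^2 \begin{pmatrix} c_k(y^*)^2 & c_k(y^*) s_k(y^*) \\ c_k(y^*) s_k(y^*) & s_k(y^*)^2 \end{pmatrix},
\]
where $y^* \in [y, y+\varepsilon]$ is the centroid of $|W|$. Its nontrivial eigenvector corresponds, via angle-addition identities, to $\tilde{c}_k(x) = \sqrt{2}\cos(2\pi k(x-y^*))$, with orthogonal partner $\tilde{s}_k(x) = \sqrt{2}\sin(2\pi k(x-y^*))$; in other words, the perturbed eigenfunctions in $E_k$ are the translates by $y^*$ of the unperturbed pair. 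Since $\|\tilde c_k\|_\infty = \|\tilde s_k\|_\infty = \sqrt{2}$, the combined leading-order contribution of $E_k$ to $f_N$ is
\[
\frac{1}{\sqrt{\lambda_k}} \bigl(|\cos(2\pi k (x-y^*))| + |\sin(2\pi k (x-y^*))|\bigr),
\]
which is minimized over $x$ at $x = y^*$. Using $|\cos u| + |\sin u| \geq 1 + 2\pi |u| - O(u^2)$ near $u = 0$ and summing over $1 \leq k \leq N$ produces a strict V-shaped local minimum of the main term at $x = y^* \in [y, y+\varepsilon]$ with slope of order $N$.

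The actual $f_N$ differs from this main term by two sources of error: second-order Rayleigh--Schr\"odinger corrections in each $E_k$, coming from couplings to other $E_j$ through matrix elements of $W$ scaled by $(\lambda_k-\lambda_j)^{-1}$, which inject $L^\infty$ noise of size $O(\varepsilon^2/k)$ per eigenfunction --- hence $O(\varepsilon^2/k^2)$ inside $f_N$ and a summed total of $O(\varepsilon^2)$ uniformly in $N$; and an $O(\varepsilon^2)$ modification of the perturbed constant eigenfunction, of the form $1 + c\varepsilon^2 G(\,\cdot\,, y^*) + O(\varepsilon^4)$ with $G$ the Green's function of $H_0$. Both are $O(\varepsilon^2)$ and hence dominated by the V-shaped signal whose slope is of order $N \gg \varepsilon^2$; they may shift the exact position of the minimum by $O(\varepsilon^2)$, but it remains inside $[y, y+\varepsilon]$.

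The principal obstacle, and the source of the cutoff $N_\varepsilon$, is the validity of the rank-one model for the action of $W$ on $E_k$. This approximation breaks down once $k \gtrsim 1/\varepsilon$: higher Fourier moments of $\phi$ beyond the zeroth begin to contribute to the $2\times 2$ matrix above, the leading eigenvectors cease to be translated cosines and sines, and the clean V-shape of the main term at $y^*$ is lost. One therefore expects to be able to take $N_\varepsilon$ of order $1/\varepsilon$, which yields $N_\varepsilon \to \infty$ as $\varepsilon \to 0$. Making the estimates fully quantitative --- in particular, obtaining uniform control over all $k \leq N_\varepsilon$ together with the bound that the aggregate perturbative noise is strictly smaller than the V-shaped signal on some subinterval of $[y, y+\varepsilon]$ containing $y^*$ --- is where the bulk of the technical work lies.
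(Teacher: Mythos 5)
Your proposal is essentially the same argument as the paper's proof: both rely on degenerate perturbation theory for each two-dimensional eigenspace, identify the leading-order perturbed eigenfunctions as translates of $\cos$ and $\sin$ centered at the perturbation, observe that the resulting main term $\sum_k \lambda_k^{-1/2}(|\cos(k(x-y^*))| + |\sin(k(x-y^*))|)$ is minimized at the perturbation center via $|\sin z| + |\cos z| \ge 1$ with equality only at multiples of $\pi/2$, and conclude that the approximation improves as $\varepsilon \to 0$ so that $N_\varepsilon \to \infty$. You fill in more detail than the paper (explicit rank-one model, the centroid $y^*$, an error-budget sketch, the expected cutoff scale $N_\varepsilon \sim 1/\varepsilon$) and are correspondingly more candid that the full quantitative control remains to be carried out; the paper leaves these same points at the level of assertion. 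One small slip: near $u = 0$ the correct expansion is $|\cos u| + |\sin u| = 1 + |u| + O(u^2)$, so the constant in front of $|u|$ is $1$ rather than $2\pi$; the summed slope of order $N$ in $x$ then arises from the factor $2\pi k$ in the argument $u = 2\pi k(x - y^*)$, not from the inequality itself.
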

The result states that the anomaly is indeed discovered independently of the cutoff as long as that cutoff is not too large. The proof is based on a bifurcation of the eigenspace. Understanding the behavior for $\varepsilon$
fixed and $N \rightarrow \infty$ seems more challenging but potentially quite interesting.

\section{Experimental Results}
\subsection{Sea-mine detection}
\label{sec:seamine}
We calculate $f_{15}$ for the problem of detecting sea-mines in side-scan sonar images, collected by the Naval Surface Warfare Center Coastal System Station.

\begin{figure}[h!]
\centering{\includegraphics[width=0.82\linewidth]{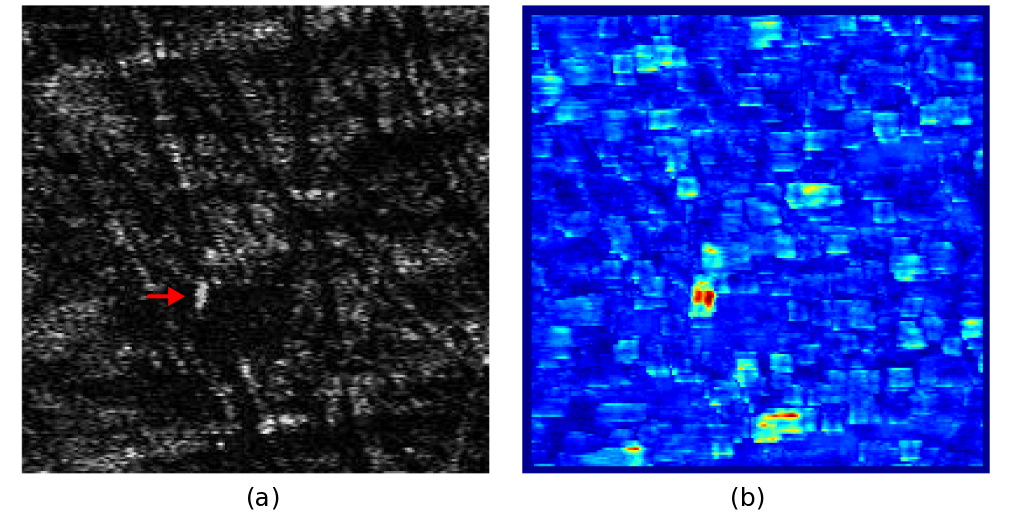}}
\caption{(a) Side scan sonar image with sea-mine indicated by red arrow. (b) The value of $f_{15}$ for each pixel (red being large values, blue being small values).
\label{fig:mine4}}
\end{figure}

Two such images are presented in Fig.~\ref{fig:mine1}(a) and Fig.~\ref{fig:mine4}(a).
Automatic detection of sea-mines in side-scan sonar imagery is difficult because of to the high variability in the appearance of the target and sea-bed reverberations (background clutter).
Fig.~\ref{fig:mine1}(b) and Fig.~\ref{fig:mine4}(b) show $f_{15}$ for these images; it achieves a maximal value on the sea-mine, thus enabling its detection.
The images were cropped to a region sized 200 (range)$\times$ 200 (cross-range) cells which contains a sea-mine and various background types. 
We construct the graph by representing each pixel by its surrounding $8 \times 8$ patch. The weight between patches is based on a Gaussian kernel of the Euclidean distance. 
For efficient computation we connect each patch to only its 16 nearest neighbors.

\subsection{Defect detection}
Figure~\ref{fig:wafer} shows the value of $f_{20}$ on a Scanning Electron Microscope (SEM) image of a patterned wafer (image source: Applied Materials, Inc.). We consider the defect in the image to be an anomaly while the patterned wafer is considered normal background clutter.
The parameters of the images and graph construction are as for the images in Sec.~\ref{sec:seamine}.
The maximal value of $f_N$ is indeed attained on the defect, which is barely discernible in the original image.

\begin{figure}[h!]
\centering{\includegraphics[width=0.8\linewidth]{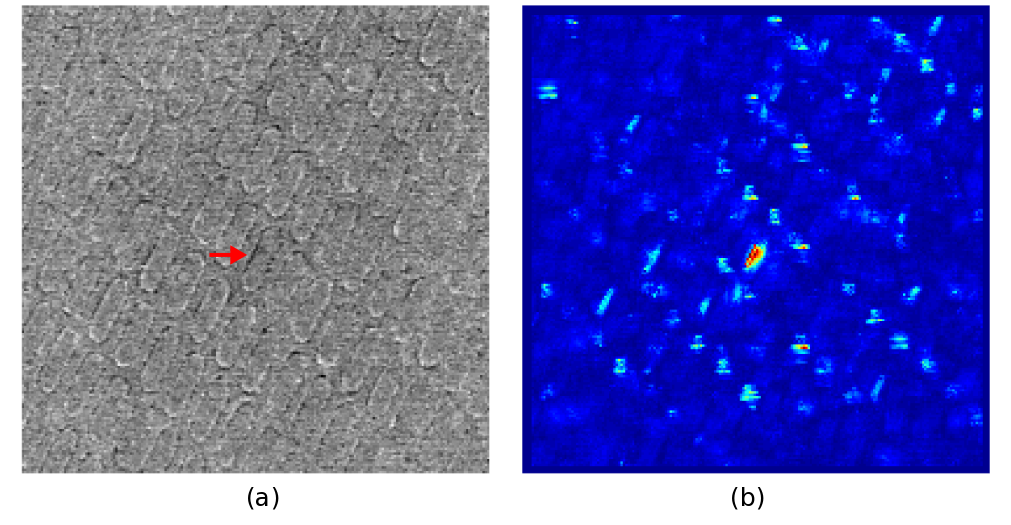}}
\caption{(a) SEM wafer image with defect indicated by red arrow. (b) The value of $f_{20}$ for each pixel (red being large values, blue being small values).
\label{fig:wafer}}
\end{figure}

\subsection{Graphics / 3D mesh}
Figure~\ref{fig:bunny} shows the value of $f_{30}$ on the Stanford bunny (a) as well as a suitable quantization (b). We observe
again that the maxima are assumed on the `outliers of the bunny' (tips of the ears, nose, paws and tail) whereas minima are assumed in the `center'.

\begin{figure}[h!]
\centering{\includegraphics[width=0.6\linewidth]{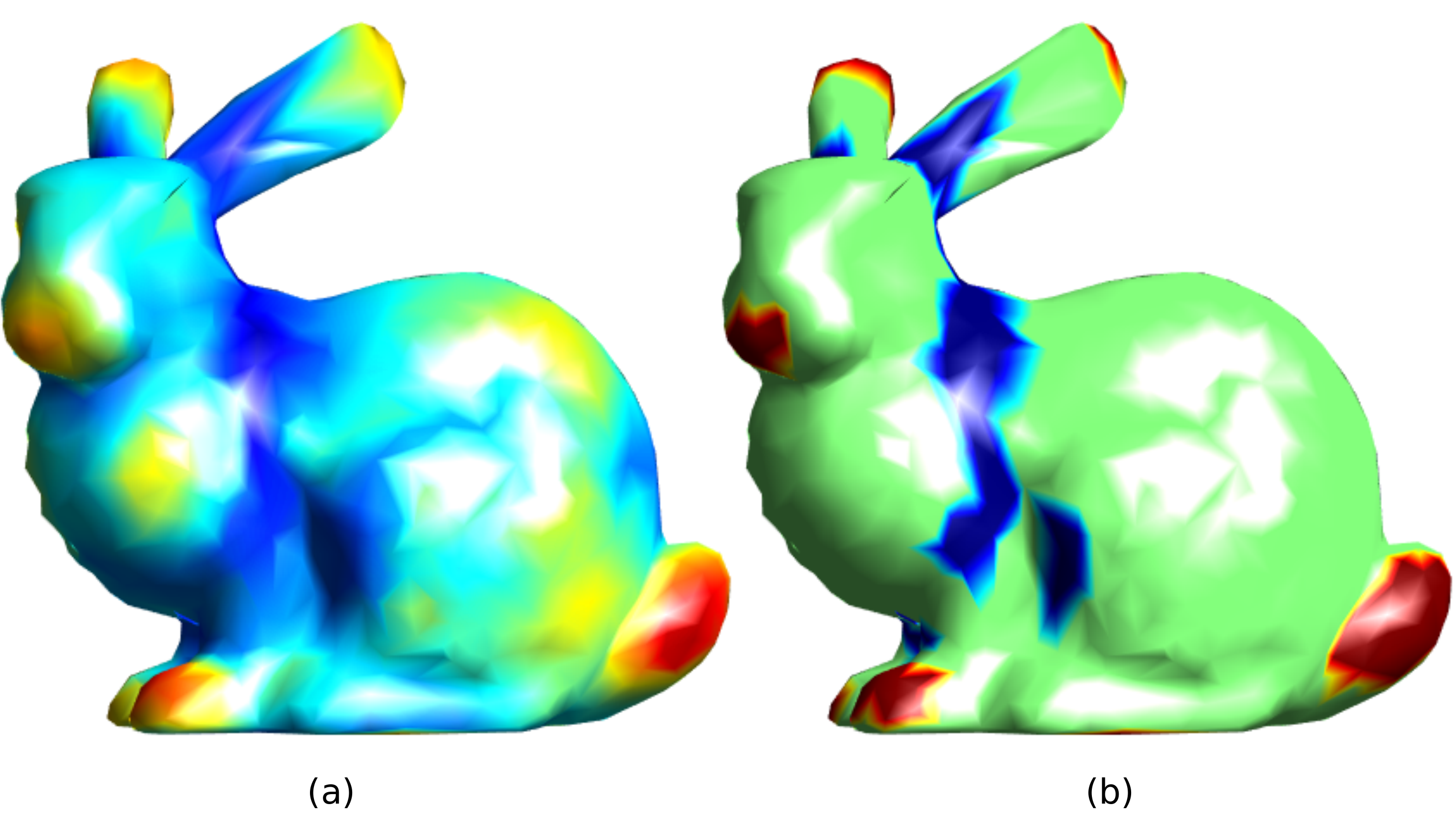}}
\caption{(a) $f_{30}$ on Stanford bunny (b) Quantized anomaly score: red is 90th percentile and blue is 10th percentile.
\label{fig:bunny}}
\end{figure}

Figure \ref{fig:homer} shows $f_{30}$ on a popular cartoon figure; we discover that $f_{30}$ is maximal at fingers and toes and the nose (and minimal at the chest). As is perhaps not surprising, it is difficult to turn these examples into a rigorous statements but they certainly underline
our main point of the function assuming minima at central locations and maxima at outliers.

\begin{figure}[h!]
\centering{\includegraphics[width=0.6\linewidth]{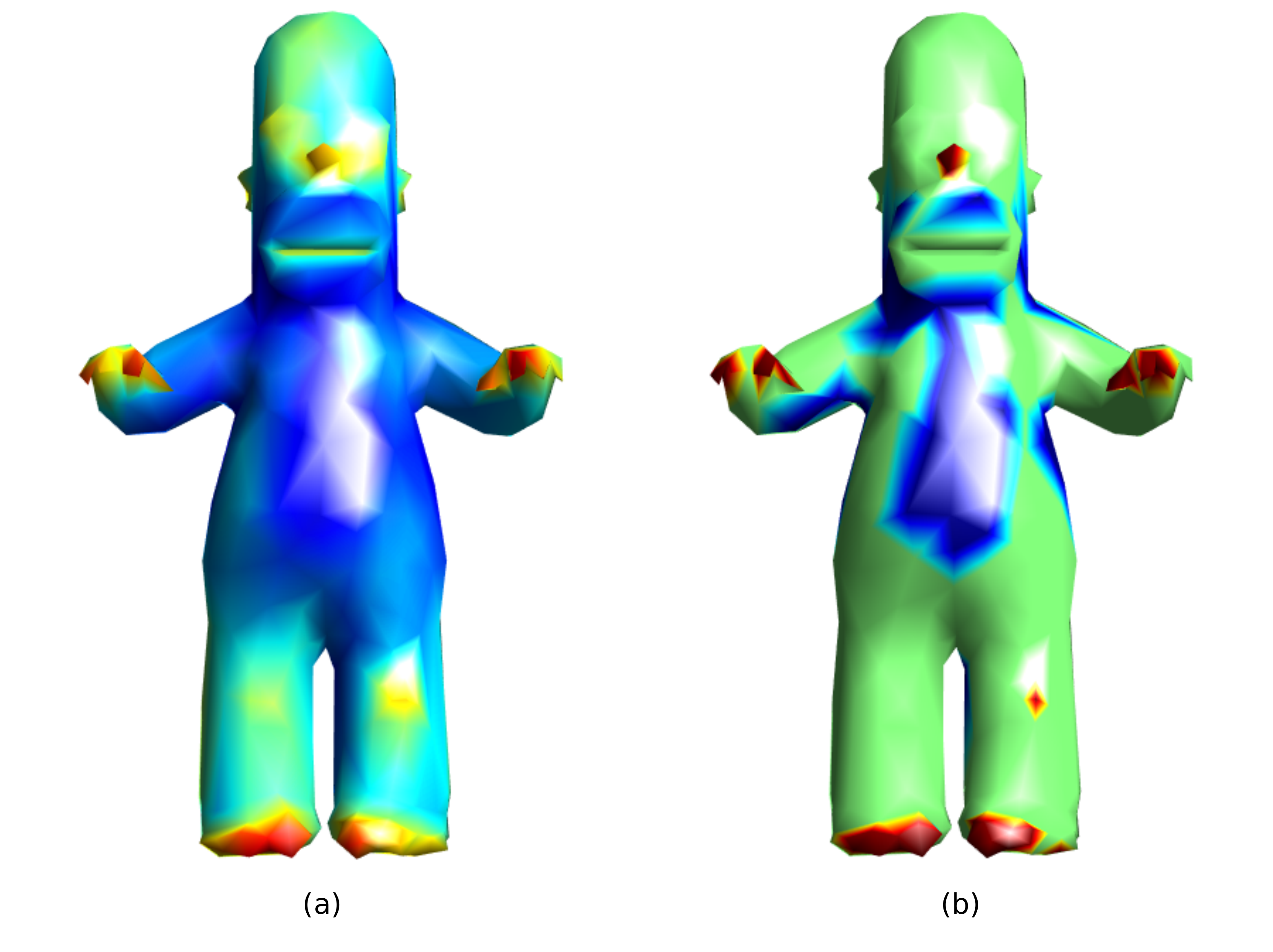}}
\caption{(a) $f_{30}$ on Homer mesh (b) Quantized anomaly score: red is 90th percentile and blue is 10th percentile).
\label{fig:homer}}
\end{figure}

\vspace{-0pt}

\section{Proofs}
\subsection{Proof of Theorem 1.}
\begin{lemma} \label{lem} Let $(a_n)_{n \geq 1}$ be a periodic sequence satisfying $a_{n+p} = a_n$ and having vanishing mean value over one period and let $(b_n)$ be positive and monotonically increasing. Then, for all $N \geq 1$,
$$ \left| \sum_{n=1}^{N}{ a_n b_n} \right| \leq    \frac{ 3b_N}{2}  \sum_{n=1}^{p}{| a_n|}.$$
\end{lemma}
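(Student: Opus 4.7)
The plan is to apply Abel summation by parts, using the periodicity and vanishing mean of $(a_n)$ to obtain a uniform bound on its partial sums and the monotonicity of $(b_n)$ to convert the resulting contribution into a telescoping sum. Setting $A_0 := 0$ and $A_n := \sum_{k=1}^{n} a_k$, summation by parts gives
$$\sum_{n=1}^{N} a_n b_n = A_N b_N - \sum_{n=1}^{N-1} A_n\,(b_{n+1}-b_n).$$

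The key intermediate step is to show that $|A_n|$ admits a uniform bound in terms of $S := \sum_{k=1}^{p}|a_k|$. Writing $n = mp + r$ with $0 \leq r < p$, the zero-mean hypothesis forces $A_{mp} = 0$, so $A_n = \sum_{k=1}^{r} a_k$, a partial sum over strictly less than one period. Applying zero mean once more, this quantity also equals $-\sum_{k=r+1}^{p} a_k$; taking whichever of the two representations has smaller $\ell^1$-norm (equivalently, averaging) yields the sharp bound $|A_n| \leq S/2$, while the cruder bound $|A_n|\leq S$ is obviously available too.

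Combining these with the monotonicity $b_{n+1}-b_n \geq 0$, which allows the interior sum to telescope to $b_N - b_1 \leq b_N$, the triangle inequality produces
$$\left|\sum_{n=1}^{N} a_n b_n\right| \leq |A_N|\,b_N + \sum_{n=1}^{N-1}|A_n|\,(b_{n+1}-b_n) \leq \frac{S}{2}\,b_N + S\,(b_N - b_1) \leq \frac{3 b_N}{2}\,S,$$
which is exactly the claimed inequality. I anticipate no genuine obstacle here: this is a classical Abel-summation estimate and the constant $3/2$ is slightly wasteful, arising from the mismatch between the sharp bound applied on the boundary term $|A_N|$ and the cruder bound used under the telescoped interior sum. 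In fact a marginally tighter constant (namely $1$ in place of $3/2$) would follow by using $|A_n|\leq S/2$ throughout, but the form stated in the lemma is more than enough for the downstream applications.
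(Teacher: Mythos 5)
Your proof is correct, and it takes a genuinely different route from the paper's. You apply Abel summation $\sum_{n=1}^{N} a_n b_n = A_N b_N - \sum_{n=1}^{N-1} A_n (b_{n+1}-b_n)$, then use the uniform partial-sum bound $|A_n|\le S$ (and the sharper $|A_n|\le S/2$ for the boundary term) together with the telescoping identity $\sum_{n=1}^{N-1}(b_{n+1}-b_n)=b_N-b_1$. The paper instead writes $a_n b_n = a_n b_N - a_n(b_N-b_n)$ and bounds the second piece via a separate auxiliary lemma for \emph{decreasing} weights, proved through the rearrangement inequality and a Leibniz alternating-series argument. Algebraically the two decompositions coincide — partial summation of $\sum a_n(b_N-b_n)$ gives exactly your interior sum — so the substantive difference lies in how that piece is controlled: your uniform-bound-plus-telescoping estimate is more elementary, avoids the rearrangement/Leibniz machinery entirely, and, as you correctly observe, yields the improved constant $1$ in place of $3/2$ once $|A_n|\le S/2$ is used throughout (since then the total is $\tfrac{S}{2}b_N + \tfrac{S}{2}(b_N-b_1)\le S\,b_N$). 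The paper's route, in turn, isolates a reusable statement about decreasing weights, which may be what motivated the author's presentation.
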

\begin{proof}  We first show a slightly different statement: if the sequence $(b_n)$ is positive and monotonically decreasing, then for all $N \geq 1$
$$ \left| \sum_{n=1}^{N}{ a_n b_n} \right| \leq b_1\sum_{n=1}^{p}{|a_n|}.$$
This can be seen as follows: the classical rearrangement inequality (see e.g. \cite{hardy}) implies that, for all $m\geq 1$
$$ \sum_{k = mp + 1}^{mp + p}{a_m b_m} \leq  \sum_{k = mp + 1}^{mp + p}{a^*_m b_m},$$
where $a^*_m$ is the rearrangement of the $p$ elements that occur within one period of the sequence $a$ in monotonically decreasing order. Then, however, bounding the series from above and regrouping the terms we obtain
an alternating series and the classical Leibnitz argument for alternating series applies. The simple example $a=(1,-1,1,-1,\dots)$ and $b_n=1$ shows the inequality to be sharp.
The statement now follows from writing
\begin{align*}  \left| \sum_{n=1}^{N}{ a_n b_n} \right| &= \left| \sum_{n=1}^{N}{ a_n b_N} - \sum_{n=1}^{N}{ a_n (b_N-b_n)} \right|\\
&\leq  \left| \sum_{n=1}^{N}{ a_n b_N}\right| +  \left| \sum_{n=1}^{N}{ a_n (b_N-b_n)} \right|.
\end{align*}
The mean zero condition on the sequence $(a_n)$ implies that
\begin{align*}
   \left| \sum_{n=1}^{N}{ a_n b_N}\right| &= b_N   \left| \sum_{n=1}^{N}{ a_n}\right| \leq   b_N    \max_{1 \leq \sigma \leq p} \left| \sum_{n=1}^{\sigma}{ a_n}\right| \leq \frac{b_N}{2} \sum_{n=1}^{p}{|a_n|}.
\end{align*}
The second sum can be estimate by using the statement above
$$  \left| \sum_{n=1}^{N}{ a_n (b_N-b_n)} \right| \leq b_N \sum_{n=1}^{p}{ |a_n|}$$
and this implies the result. 
\end{proof}

\begin{lemma} Let $0 < y < 1$. Then,
$$ \lim_{n \rightarrow \infty} \frac{1}{n} \sum_{k = 1}^{n}{ | \sin{k \pi y} |}  \geq \frac{1}{2}$$
with equality if and only if $y=1/2$.
\end{lemma}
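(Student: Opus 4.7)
The plan is to split into cases based on the arithmetic nature of $y \in (0,1)$.

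For irrational $y$, I would invoke Weyl's equidistribution theorem: the sequence $\{ky\}_{k \geq 1}$ is equidistributed modulo $1$. Since $|\sin(\pi t)|$ has period $1$, we have $|\sin(k\pi y)| = |\sin(\pi \{ky\})|$, so Cesàro averaging against the continuous function $t \mapsto |\sin(\pi t)|$ yields
$$ \lim_{n \to \infty} \frac{1}{n}\sum_{k=1}^{n}|\sin(k\pi y)| = \int_0^1 |\sin(\pi t)|\,dt = \frac{2}{\pi} > \frac{1}{2},$$
so the inequality is strict for every irrational $y$.

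For rational $y = p/q$ in lowest terms with $0 < p < q$, the sequence $|\sin(k\pi p/q)|$ is periodic in $k$ with period $q$, hence the Cesàro limit equals the average over one period. Because $\gcd(p,q)=1$, multiplication by $p$ permutes the residues modulo $q$, so
$$ \frac{1}{q}\sum_{k=1}^{q}\Bigl|\sin\frac{k\pi p}{q}\Bigr| = \frac{1}{q}\sum_{j=0}^{q-1}\sin\frac{j\pi}{q} = \frac{1}{q}\cot\frac{\pi}{2q},$$
where the second equality is the standard closed form for $\sum_{j=0}^{q-1}\sin(j\alpha)$ at $\alpha = \pi/q$ combined with $\sin((q-1)\pi/(2q)) = \cos(\pi/(2q))$. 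Notably, the result is independent of $p$, which simplifies matters.

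It then remains to show $\frac{1}{q}\cot(\pi/(2q)) \geq \frac{1}{2}$ with equality only when $q=2$. Setting $u = \pi/(2q)$ rewrites the quantity as $(2/\pi)\,h(u)$ with $h(u) = u\cot u$. I would show $h$ is strictly decreasing on $(0,\pi/2)$ via
$$ h'(u) = \frac{\tfrac{1}{2}\sin 2u - u}{\sin^2 u} < 0,$$
since $\sin 2u < 2u$ for $u > 0$. Consequently $\frac{1}{q}\cot(\pi/(2q))$ is strictly increasing in $q$, is minimized at $q=2$ where it equals $\tfrac{2}{\pi}\cdot\tfrac{\pi}{4}\cdot\cot(\pi/4) = \tfrac{1}{2}$, and tends to $2/\pi$ as $q \to \infty$ (consistent with the irrational case). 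Since the only rational in $(0,1)$ with denominator $2$ is $1/2$, the uniqueness follows.

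The main obstacle is really just bookkeeping: verifying the period-$q$ reduction carefully, invoking the correct form of Weyl equidistribution, and checking the short calculus argument for $h(u) = u\cot u$. No single step is hard; the conceptual pleasure is that the rational average collapses to a quantity depending only on the denominator $q$, which then admits the clean monotonicity analysis.
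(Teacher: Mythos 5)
Your proof is correct, and for the rational case it takes a genuinely different route from the paper's sketch. Both treatments handle irrational $y$ identically via equidistribution, obtaining the limit $\int_0^1 |\sin(\pi t)|\,dt = 2/\pi > 1/2$. For rational $y = p/q$ in lowest terms, both first reduce (via the bijection $k \mapsto kp$ on $\mathbb{Z}/q\mathbb{Z}$) to the $p$-independent average $\frac{1}{q}\sum_{k=0}^{q-1}\sin(k\pi/q)$. Here the paths diverge: the paper asserts that this quantity is monotonically increasing in $q$ ``by a simple concavity argument (i.e.\ Jensen's inequality),'' leaving the details to the reader, while you instead evaluate the sum in closed form as $\frac{1}{q}\cot(\pi/(2q)) = \frac{2}{\pi}\,h(u)$ with $u=\pi/(2q)$ and $h(u)=u\cot u$, and then verify $h'(u) = (\tfrac12\sin 2u - u)/\sin^2 u < 0$ from $\sin 2u < 2u$. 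Your version has the advantage of being fully explicit: it avoids the (slightly delicate) task of comparing left-endpoint Riemann sums at non-nested partition sizes $q$ and $q+1$, it pins down the minimum $\tfrac12$ at $q=2$ by direct evaluation, and it also exhibits the limit $2/\pi$ as $q\to\infty$, tying the rational case consistently to the irrational one. The paper's concavity route, if fleshed out, would avoid the trigonometric summation identity but at the cost of a less transparent monotonicity step.

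One minor presentational point: when you pass from $\frac{1}{q}\sum_{k=0}^{q-1}\bigl|\sin(k\pi/q)\bigr|$ to $\frac{1}{q}\sum_{j=0}^{q-1}\sin(j\pi/q)$, it is worth saying explicitly that the absolute values may be dropped because $j\pi/q \in [0,\pi)$ for $0 \le j \le q-1$, so every term is already nonnegative. This is obvious but is the step that makes the closed-form identity for $\sum \sin(j\alpha)$ applicable.
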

\begin{proof}[Sketch of the Proof.]  The problem reduces to understanding the behavior of $\left(ky - \left\lfloor ky \right\rfloor\right)_{k=1}^{\infty}$ on the unit interval. The asymptotic behavior is completely obvious: if $y$ is irrational,
then the sequence is uniformly distributed and
$$ \lim_{n \rightarrow \infty}{ \frac{1}{n}  \sum_{k = 1}^{n}{ | \sin{k \pi y} |}  } = \int_{0}^{1}{|\sin{\pi x}| dx} = \frac{2}{\pi}.$$
If $y = p/q \in \mathbb{Q}$ with $\gcd(p,q) = 1$, then
$$ \lim_{n \rightarrow \infty}{ \frac{1}{n}  \sum_{k = 1}^{n}{ | \sin{k \pi y} |}  } = \frac{1}{q} \sum_{k=0}^{q-1}{ \left| \sin{\left(\frac{k \pi}{q}\right)} \right| },$$
which by a simple concavity argument (i.e. Jensen's inequality) can be shown to be monotonically increasing in $q$. The quantity is therefore minimal for $y=1/2$, which then immediately leads to a uniform estimate for all $0 < y < 1$
$$ \lim_{n \rightarrow \infty}{ \frac{1}{n}  \sum_{k = 1}^{n}{ | \sin{k \pi y} |}  } \geq \frac{1}{2}.$$
\end{proof}

\begin{proof}[Proof of Theorem 1]
The quantity to be evaluated on the unit square is
$$ f(x,y) = \sum_{m^2 + n^2 \leq \lambda}{ \frac{|\sin{(m \pi x)}|| \sin {(n \pi y)}|}{\sqrt{m^2+n^2}}}.$$
We simplify
$$ f(x+\varepsilon, y) - f(x,y) =  \sum_{m^2 + n^2 \leq \lambda}{ \frac{| \sin {(n \pi y)}|}{\sqrt{m^2+n^2}}\left( |\sin{(m \pi (x+\varepsilon))}| - |\sin{(m \pi x)}| \right)  }$$
 and observe that for $x \in \mathbb{R}$ and $\varepsilon \rightarrow 0$
$$ |\sin{(x+\varepsilon)}| - |\sin{(x)}| = \begin{cases} |\varepsilon| + \mathcal{O}(\varepsilon^2) \qquad &\mbox{if}~x/\pi \in \mathbb{Z} \\
\varepsilon\sgn(\sin{(x)}) \cos{(x)} + \mathcal{O}(\varepsilon^2)  \qquad &\mbox{otherwise.} \end{cases}$$
Here, $\mbox{sgn}$ denotes the signum function
$$ \mbox{sgn}(x) = \begin{cases} 1 \qquad &\mbox{if}~x > 0 \\
0\qquad &\mbox{if}~x = 0\\
-1 \qquad &\mbox{if}~x < 0.
\end{cases}$$
This leads us to the first order expansion (ignoring terms of size $\varepsilon^2$ and smaller)
\begin{align*}
 f(x+\varepsilon, y) - f(x,y)  &=  \sum_{m^2 + n^2 \leq \lambda \atop mx \in \mathbb{Z}}{ \frac{| \sin {(n \pi y)}|}{\sqrt{m^2+n^2}} m |\varepsilon|   } \\
&+  \sum_{m^2 + n^2 \leq \lambda \atop mx \notin \mathbb{Z}}{ \frac{| \sin {(n \pi y)}|}{\sqrt{m^2+n^2}} m \varepsilon \sgn(\sin{(m \pi x)}) \cos{(m \pi x)}   }
\end{align*}
Let us now assume that $x=p/q$ for some coprime $p,q \in \mathbb{Z}$ with $q \neq 0$. We will now show that, for $\lambda$ sufficiently large, the first term dominates the second term. The
symmetry $f(x,y) = f(y,x)$ will then imply the result. We start with the first term
\begin{align*}
  \sum_{m^2 + n^2 \leq \lambda \atop mx \in \mathbb{Z}}{ \frac{| \sin {(n \pi y)}|}{\sqrt{m^2+n^2}} m |\varepsilon|   }= |\varepsilon|  \sum_{m^2 + n^2 \leq \lambda \atop q|m}{ \frac{| \sin {(n \pi y)}|}{\sqrt{1+(n/m)^2}}},
\end{align*}
restrict the sum to a subset for which the denominator is small,
$$  \sum_{m^2 + n^2 \leq \lambda \atop q|m}{ \frac{| \sin {(n \pi y)}|}{\sqrt{1+(n/m)^2}}}    \geq     \frac{1}{\sqrt{2}}\sum_{m^2 + n^2 \leq \lambda \atop q|m \wedge m \geq n}{ | \sin {(n \pi y)}|},$$
and then bound it as
$$    \sum_{m^2 + n^2 \leq \lambda \atop q|m \wedge m \geq n}{ | \sin {(n \pi y)}|}  \geq    \sum_{\sqrt{\lambda}/2 \leq m \leq 99\sqrt{\lambda}/100 \atop q|m }{ \sum_{n \leq \sqrt{\lambda-m^2}}  | \sin {(n \pi y)}|}.$$
For this choice of parameters $\sqrt{\lambda}/2 \leq m \leq 99\sqrt{\lambda}/100$, we have that $\sqrt{\lambda - m^2} \gtrsim \sqrt{\lambda}$ and thus, as $\lambda$ increases, we may invoke
Lemma 2 to conclude that for $\lambda$ sufficiently large,
$$ \sum_{n \leq \sqrt{\lambda-m^2}} { | \sin {(n \pi y)}|} \geq \frac{\sqrt{\lambda-m^2}}{4}.$$
This implies that, for $\lambda$ sufficiently large,
 $$   \frac{1}{\sqrt{2}}\sum_{m^2 + n^2 \leq \lambda \atop q|m \wedge m \geq n}{ | \sin {(n \pi y)}|}   \geq    \frac{1}{\sqrt{2}}\sum_{\sqrt{\lambda}/2 \leq m \leq 99\sqrt{\lambda}/100 \atop q|m }{ \frac{\sqrt{\lambda-m^2}}{4}}.$$
A simple comparison shows that for $\lambda$ sufficiently large and up to errors of lower order
$$ \sum_{\sqrt{\lambda}/2 \leq m \leq 99/100\sqrt{\lambda} \atop q|m }{ \frac{\sqrt{\lambda-m^2}}{4}} \sim \frac{1}{q} \sum_{\sqrt{\lambda}/2 \leq m \leq 99\sqrt{\lambda}/100}{ \frac{\sqrt{\lambda-m^2}}{4}} \sim \frac{1}{4q} \int_{\sqrt{\lambda}/2}^{99\sqrt{\lambda}/100}{ \sqrt{\lambda - x^2}dx} \gtrsim \frac{\lambda}{16 q}.$$
This shows that the first sum is growing at least linearly $\lambda$, it remains to show that it dominates the second sum as $\lambda$ gets large.
We start by rewriting it as
$$ \varepsilon \sum_{n=1}^{\lfloor \sqrt{\lambda} \rfloor}{ | \sin {(n \pi y)}| \sum_{m \leq \sqrt{\lambda - n^2}}{  \frac{ \sgn(\sin{(m \pi x)}) \cos{(m \pi x)}}{\sqrt{1+(n/m)^2}}}}$$
The function $\sgn(\sin{(x)}) \cos{(x)}$ has period $\pi$ and the map $k \rightarrow k \cdot p$ is a permutation on $\mathbb{Z}_q$. It is then easy to see that the symmetries of sine and cosine imply
$$\sum_{k=1}^{q}{ \sgn\left(\sin{ \left( \frac{k \pi p}{q}\right)}\right) \cos{ \left( \frac{k \pi p}{q}\right)}  } = 0.$$
We apply Lemma~\ref{lem} with the $q-$periodic sequence
$$ a_m  = \sgn\left(\sin{ \left( \frac{m \pi p}{q}\right)}\right) \cos{ \left( \frac{m \pi p}{q}\right)} \qquad \mbox{and} \qquad b_m= \frac{1}{\sqrt{1+(n/m)^2}}$$
and obtain
\begin{align*}
  \sum_{m \leq \sqrt{\lambda - n^2}}{  \frac{ \sgn(\sin{(m \pi x)}) \cos{(m \pi x)}}{\sqrt{1+(n/m)^2}}} \leq  \frac{3}{2} b_{\left\lfloor \sqrt{\lambda-n^2} \right\rfloor} q \leq 2 q.
\end{align*}
A straightforward summation then yields
$$  \varepsilon \sum_{n=1}^{\lfloor \sqrt{\lambda} \rfloor}{ | \sin {(n \pi y)}| \sum_{m \leq \sqrt{\lambda - n^2}}{  \frac{ \sgn(\sin{(m \pi x)}) \cos{(m \pi x)}}{\sqrt{1+(n/m)^2}}}} \leq  2 |\varepsilon|  \sqrt{\lambda} q,$$
which has a smaller order of growth than the dominant term. This implies the result.
\end{proof}

\textit{Remark.} The proof also shows that one can expect $$\lambda \gtrsim \frac{q^4}{\min(y, 1-y)}$$
to suffice: this follows easily if one replaces the Lemma 2 by the following statement: for all $0 < y < 1$ and all
$n \geq 10/\min(y, 1-y)$, we have
$$ \sum_{k=1}^{n}{|\sin{k \pi y}|} \geq \frac{n}{100},$$
which follows quickly from simple arguments about the dynamical behavior of $ky - \left\lfloor ky \right\rfloor$.

\subsection{Proof of Theorem 2.}
\begin{proof} It is well-known that the Paley graph generated by $\mathbb{F}_p$ (when $p \equiv 1$ (mod 4)) has eigenvalues $(p-1)/2$ (with multiplicity 1) and eigenvalues 
$$ \lambda_{-} = \frac{p-\sqrt{p}}{2} \qquad \mbox{and} \qquad \lambda_{+} = \frac{p+\sqrt{p}}{2},$$
each with multiplicity $(p-1)/2$. The $k-$th eigenvector is given by
$$ e_k = \left( e^{ \frac{2\pi i j k}{p}}\right)_{j=0}^{p-1}.$$
Moreover, the eigenvalue associated to $e_k$ is $\lambda_+$ if and only if $k \equiv x^2$ (mod $p$) does not have a solution. Let us now fix a value $j \in \left\{1, \dots, p-1\right\}$. Then
$$ \sum_{k }{ \frac{\phi_k(j)}{\sqrt{\lambda_k}}} = \sum_{k \not\equiv x^2}{\frac{e^{ \frac{2\pi i j k}{p}}}{\sqrt{\lambda_{+}}}} + \sum_{k \equiv x^2}{\frac{e^{ \frac{2\pi i j k}{p}}}{\sqrt{\lambda_{-}}}}.$$
We now aim to show that this sum does not depend on the value of $j$ but only on whether $j$ is a quadratic residue or a quadratic non-residue. Suppose $j$ is a quadratic residue. The product of 
a quadratic residue and a non-residue is a non-residue; moreover, multiplication with a fixed non-zero number is a bijection in a finite field and therefore
 $$ \sum_{k \not\equiv x^2}{\frac{e^{ \frac{2\pi i j k}{p}}}{\sqrt{\lambda_{+}}}} =  \frac{1}{\sqrt{\lambda_{+}}} \sum_{k \not\equiv x^2}{e^{ \frac{2\pi i k}{p}}} \quad \mbox{and} \quad \sum_{k \equiv x^2}{\frac{e^{ \frac{2\pi i j k}{p}}}{\sqrt{\lambda_{-}}}} =   \frac{1}{\sqrt{\lambda_{-}}} \sum_{k \equiv x^2}{e^{ \frac{2\pi i k}{p}}}$$
and both sums are independent of $j$.
If $j$ is a quadratic non-residue, we can use the fact that the product of two non-residues is a residue and argue in exactly the same way.
\end{proof}

\subsection{Proof of Theorem 3.}
\begin{proof} The argument is a fairly straightforward perturbation argument. As $\varepsilon \rightarrow 0$, classical methods imply that different eigenspaces separated by a gap remain stable. The eigenspaces, all of which have multiplicity 2, undergo a bifurcation. An elementary computation shows that the eigenspace $\left\{\sin{(k x)}, \cos{(kx)} \right\}$ associated
to the eigenvalue $k^2+1$ of the unperturbed problem, splits into two eigenvalues
$$\lambda_{-} \sim k^2  + 1- c_1\varepsilon^2 \qquad \mbox{and} \qquad \lambda_{+}  \sim k^2 + 1 + c_2 \varepsilon^2$$
for some constants $c_1,c_2 > 0$. Moreover, the eigenfunctions will be, up to small error,
$$ \phi_{-} \sim \sin{( k(x - y) + O(\varepsilon))} \qquad \mbox{and} \qquad  \phi_{+} \sim \cos{( k(x - y) + O(\varepsilon))}.$$
Altogether, as $\varepsilon \rightarrow 0$, the sum over the eigenfunctions is approximated by
$$ \sum_{k \leq N}{ \frac{1}{\sqrt{\lambda_k}} \frac{|\phi_k(x)|}{\|\phi_k\|_{L^{\infty}(M)}}} \sim \sum_{k \leq N}{ \frac{|\sin{(k(x-y+O(\varepsilon)))}|}{k} + \frac{|\cos{(k(x-y+O(\varepsilon)))}|}{k}}$$
and the result follows from the elementary inequality
$$ |\sin{z}| + |\cos{z}| \geq 1 \qquad \mbox{with equality if and only if}~(2z)/\pi \in \mathbb{Z}.$$
Since the accuracy of these estimates increases as $\varepsilon \rightarrow 0$, we get that $N_{\varepsilon} \rightarrow \infty$ as $\varepsilon \rightarrow 0$.
\end{proof}

\section{Concluding Comments and Remarks}

\subsection{Heuristics.} In this section we describe how the phenomenon was originally discovered: the main insight
was that the $L^{\infty}-$normalized eigenfunction may, when properly rescaled, be interpreted as an
approximation of the distance to the nearest nodal line (as it appeared in the elliptic estimates in \cite{cheng, manas}).
More precisely, the starting point of this investigation was the following recent inequality for real-valued functions
due to M. Rachh and the third author \cite{manas}.
\begin{thm}[Rachh and S., 2016]
There is $c>0$ such that for all simply-connected $\Omega \subset \mathbb{R}^2$ and all $u:\Omega \rightarrow \mathbb{R}$, the following holds: if $u$ vanishes on the boundary $\partial \Omega$ and $|u(x_0)| = \| u \|_{L^{\infty}(\mathbb{R})}$, then
$$   \inf_{y \in \partial \Omega}{ \| x_0 - y\|}  \geq c \left\| \frac{\Delta u}{u} \right\|^{-1/2}_{L^{\infty}(\Omega)}.$$
\end{thm}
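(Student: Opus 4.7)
The plan is to pair matching upper and lower bounds on the spherical average of $u$ over the boundary of the maximal disk inscribed in $\Omega$ at $x_0$, which will force a lower bound on its radius.

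\textbf{Normalization.} By homogeneity, assume $u(x_0) = 1 = \|u\|_{L^{\infty}}$ (replacing $u$ by $\pm u/\|u\|_{L^\infty}$), and write $-\Delta u = V u$ with $V := -\Delta u/u$, so $|V| \leq \Lambda := \|\Delta u/u\|_{L^{\infty}(\Omega)}$. The dilation $u(x) \mapsto u(x/\sqrt{\Lambda})$ reduces us to $\Lambda = 1$, and it then suffices to show $r := \inf_{y \in \partial \Omega} \|x_0 - y\| \geq c$ for some universal $c > 0$.

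\textbf{Lower bound via Green's representation.} The disk $B_r(x_0)$ lies inside $\Omega$ and touches $\partial \Omega$ at some $y^*$ with $u(y^*) = 0$. On $B_r(x_0)$ the Green's representation for $\Delta u = -Vu$ reads
\begin{equation*}
1 = u(x_0) = \frac{1}{2\pi r}\int_{\partial B_r(x_0)} u\, dS + \int_{B_r(x_0)} G_r(x_0, y)\,V(y)\,u(y)\, dy,
\end{equation*}
where $G_r(x_0, y) = \frac{1}{2\pi}\log(r/\|y - x_0\|) \geq 0$ is the Dirichlet Green's function of the disk centered at $x_0$. Since $|Vu| \leq 1$ and a direct calculation gives $\int_{B_r(x_0)} G_r(x_0, y)\, dy = r^2/4$, the second term is at most $r^2/4$ in absolute value, so
\begin{equation*}
U(r) := \frac{1}{2\pi r}\int_{\partial B_r(x_0)} u\, dS \;\geq\; 1 - \tfrac{r^2}{4}.
\end{equation*}

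\textbf{Upper bound via the boundary zero.} Because $B_r(x_0) \subset \Omega$ is tangent to $\partial\Omega$ at $y^*$, the domain enjoys an interior ball condition at $y^*$ of radius $r$. For $\Delta u = -Vu$ with $|V|, |u| \leq 1$, standard boundary regularity under an interior ball condition yields a Lipschitz estimate $|u(y)| \leq C\|y - y^*\|$ for $y \in \partial B_r(x_0)$ close to $y^*$, with $C$ a universal constant. Applying this on an arc of length $\sim 1/C$ around $y^*$ and the trivial bound $|u|\leq 1$ on the remainder of $\partial B_r(x_0)$, a short calculation gives $U(r) \leq 1 - c_1/r$ with $c_1 > 0$ universal, valid once $r$ exceeds a universal threshold.

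\textbf{Conclusion and main obstacle.} Chaining the two bounds, $c_1/r \leq r^2/4$, whence $r \geq (4c_1)^{1/3}$; undoing the rescaling gives $r \geq c/\sqrt{\Lambda}$. The hard step is the boundary Lipschitz estimate at $y^*$ under only an interior ball condition: Hopf-type barriers make this routine when $u \geq 0$ locally near $y^*$, but here $u$ has no a priori sign, so one must set up a two-sided comparison using $\pm u$ against the first Dirichlet eigenfunction (or torsion function) of the inscribed disk. A cleaner but non-effective alternative is a compactness/blow-up argument: if the inequality failed, rescaling by $r_n \to 0$ at $\Lambda=1$ would yield $\Delta v_n \to 0$, producing a bounded harmonic limit on $B_1(0)$ that attains its maximum $1$ at the origin yet vanishes at a point of $\overline{B_1(0)}$, contradicting the strong maximum principle.
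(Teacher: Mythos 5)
This theorem is only \emph{quoted} in the paper (it is Theorem~4, cited from Rachh--Steinerberger \cite{manas}); no proof appears here, so there is nothing to compare against line by line. Judged on its own, your proposal takes a genuinely different route from the actual proof in \cite{manas}, which runs through the Feynman--Kac representation of $u$ and a Beurling-type harmonic-measure estimate: one writes $u(x_0)=\mathbb{E}_{x_0}\!\bigl[u(\omega(t\wedge\tau_\Omega))\exp\!\bigl(\int_0^{t\wedge\tau_\Omega}V\bigr)\bigr]$, chooses $t\sim 1/\|V\|_\infty$, and uses that in a \emph{simply connected} planar domain Brownian motion started near the boundary exits quickly with probability bounded away from zero. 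Your Green's-function step is correct and clean (the computation $\int_{B_r}G_r(x_0,\cdot)=r^2/4$ is right, and it gives the lower bound $U(r)\ge 1-r^2/4$), but the upper bound is where the argument breaks.

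The gap is that the proposed boundary Lipschitz estimate $|u(y)|\le C\|y-y^*\|$ with a \emph{universal} $C$, under an interior-ball condition only, is false, and the weakness is structural rather than technical. Your argument never invokes simple connectedness, yet the theorem is genuinely false without it. Take $\Omega_\varepsilon=B_1(0)\setminus\overline{B_\varepsilon(0)}$ and let $u_\varepsilon$ be the (radial) first Dirichlet eigenfunction, normalized to sup~$1$. Then $\|\Delta u_\varepsilon/u_\varepsilon\|_\infty=\lambda_1(\Omega_\varepsilon)\to j_0^2$, while the maximum of $u_\varepsilon$ sits at radius $r_*\sim |\log\varepsilon|^{-1/2}\to 0$, so $\operatorname{dist}(x_{\max},\partial\Omega_\varepsilon)\le r_*\to 0$ even though $\|V\|_\infty$ stays bounded. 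The inscribed-ball radius at $x_{\max}$ goes to zero, and the relevant Lipschitz constant at the inner boundary blows up like $1/(\varepsilon|\log\varepsilon|)$. Any proof that would work verbatim on the annulus cannot be correct, and yours would: neither the Green's-function step nor the barrier step sees the topology.

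The compactness/blow-up fallback does not close the gap either. After rescaling by $r_n\to 0$ you only get interior estimates, hence $v_n\to v$ in $C^1_{\mathrm{loc}}(B_1(0))$. The strong maximum principle then forces $v\equiv 1$ on the \emph{open} ball, which is perfectly consistent with $v_n(z_n^*)=0$ for $z_n^*\in\partial B_1$: the contradiction would require uniform equicontinuity of $v_n$ up to $\partial B_1(0)$, and that is exactly the modulus-of-continuity estimate you were trying to establish in the first place. So the blow-up argument is circular. To repair the proposal one would have to replace the local barrier step by an estimate that sees the global boundary -- e.g.\ a Beurling-type bound on the harmonic measure of $\partial\Omega\cap B_{Kr}(x_0)$ in $B_{Kr}(x_0)$, which is precisely where simple connectedness enters the Rachh--Steinerberger argument.
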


This inequality has a particular simple interpretation whenever $u$ is a Laplacian eigenfunction $-\Delta u = \lambda u$ since it implies
that a Laplacian eigenfunction assumes its maximum at least a wavelength ($\sim \lambda^{-1/2}$) away from its nodal domain $\left\{ x \in M: u(x) = 0\right\}$ on two-dimensional
domains.
The inequality in \cite{manas} also generalizes to higher dimensions once the notion of distance has been suitably adapted; a version on graphs
equipped with the Graph Laplacian due to Rachh and the first and third author \cite{cheng} gave some numerical evidence that the maxima and minima
of eigenfunctions correspond to points that are either very central or very decentralized. 
All these results combined suggests the vague heuristic that at least for generic eigenfunctions, $-\Delta u = \lambda u$,
$$ \frac{1}{\sqrt{\lambda}} \frac{|u(x)|}{\|u\|_{L^{\infty}(M)}} \sim \mbox{distance of}~x~\mbox{to the nodal set.}$$
This cannot be true in the sense of a mathematical theorem and it is easy to construct counterexamples (however, the left-hand side is always dominated by the right-hand side
if distance is replaced by a notion of distance based on capacity, see \cite{manas}); however, it should be `generically' true 
in all the natural ways: for example, we would expect it to be true with high likelihood in the random
wave model or in the setting of random eigenfunctions on geometries where eigenvalues have high multiplicity (i.e.
$\mathbb{S}^{d-1}$ or $\mathbb{T}^d$).

\subsection{Geometric analogues} A natural geometric quantity on the manifold
is then, for a fixed point $x \in M$, the sum of the distances to the nearest nodal set across multiple eigenfunctions -- since the
distance itself may be fairly nontrivial to compute and since this normalized quantity does seem to be a good indicator
of the actual distance, this suggests to consider the quantity
$$ \sum_{k \leq N}{ \frac{1}{\sqrt{\lambda_k}} \frac{|\phi_k(x)|}{\|\phi_k\|_{L^{\infty}(M)}}}$$
and this is how the phenomenon was discovered. Both the presence of the absolute value as well as the somewhat uncommon normalization in $L^{\infty}(M)$ 
are fairly unusual and we are not aware of any theory that would imply nontrivial statements for this quantity.
Clearly, this heuristic motivation raises another question.
\begin{quote}
\textbf{Question.} Is the purely geometric quantity, the sum over distances to the nearest nodal line, of comparable
intrinsic interest?
\end{quote}
In the examples that we consider, both quantities are roughly comparable. Moreover, the geometric quantity is not
as easy to define on a graph (since a graph generically does not have a nodal set but merely sign changes); nonetheless,
it could be an interesting avenue to pursue.

\subsection{Multiplicity of eigenvalues.}
We note that if there is an eigenvalue with multiplicity, then the quantity
$$ f_N(x) = \sum_{k \leq N}{ \frac{1}{\sqrt{\lambda_k}} \frac{|\phi_k(x)|}{\|\phi_k\|_{L^{\infty}(M)}}}$$
is not well-defined since the eigenfunctions $\phi_k$ are only defined up to a change of basis. Clearly, this
phenomenon does not generically arise in real-life situations; one natural way around would be to define
it as an integral over all rotations of the eigenspace but, ultimately, we do not understand the phenomenon
well enough to have any insight into this degenerate case.

\subsection{The $L^{\infty}-$norm} The normalization in $L^{\infty}(M)$ is certainly unusual; one could naturally
normalize in other $L^p-$spaces and, again in generic cases, the difference is marginal since we would expect
that $\| \phi_k\|_{L^{\infty}} \lesssim_{\varepsilon} \lambda_k^{\varepsilon} \|\phi_k\|_{L^2}$ in the generic 
quantum-ergodic case. The normalization in $L^{\infty}$ is motivated by the result above and seems
natural in the examples that we consider, however, other normalizations are conceivable.

\end{document}